\documentclass[11pt,a4paper]{article}
\usepackage[margin=2.6cm]{geometry}
\usepackage{amsmath,amssymb,amsthm}
\usepackage{mathtools}
\usepackage{enumitem}
\usepackage{hyperref}
\hypersetup{colorlinks=true,linkcolor=blue,citecolor=blue,urlcolor=blue}
\usepackage[utf8]{inputenc}
\newtheorem{theorem}{Theorem}
\newtheorem{lemma}{Lemma}
\newtheorem{corollary}{Corollary}

\newtheorem{remark}{Remark}

\newcommand{\HH}{\mathbb{H}}
\newcommand{\RR}{\mathbb{R}}
\newcommand{\Ric}{\mathrm{Ric}}
\newcommand{\Lie}{\mathcal{L}}
\newcommand{\slR}{\mathfrak{sl}(2,\RR)}

\title{Ricci Soliton Classification on $\HH^2\times\RR$}
\author{Anton Khaliapin}
\date{August, 2026 }

\begin{document}
\maketitle

\begin{abstract}
We study Ricci solitons on the Riemannian manifold
$\mathbb H^2\times\mathbb R$ equipped with the standard metric.
A complete classification of soliton vector fields is obtained: they form
a four-dimensional affine space, namely a translate of
the Killing algebra $\mathfrak{isom}(\mathbb H^2\times\mathbb R)$. All corresponding solitons are expanding. In addition, gradient solitons are fully
characterized and shown to form a one-parameter subfamily of the complete family of soliton fields.
As a byproduct, every soliton vector field turns out to be affine, preserving the
Levi-Civita connection, the curvature tensor, and the Ricci tensor.
\end{abstract}

\section{Introduction}

Homogeneous three-dimensional Riemannian manifolds occupy a central place in
the geometry of low-dimensional manifolds, primarily due to Thurston's geometrization picture, which singles out eight model geometries
$E^3,\ H^3,\ S^3,\ S^2\times\RR,\ \mathrm{Nil},\ \widetilde{SL_2\RR},
\ \mathrm{Sol}$, and $\HH^2\times\RR$ \cite{thurston1997}. In this context, a model geometry is understood as a smooth simply connected manifold on which some Lie group acts transitively and the stabilizer is compact.

For the manifold $\HH^2 \times \RR$, the connected component of the isometry group is $\operatorname{PSL}(2,\RR)\times \RR$ with stabilizer $\operatorname{SO}(2)$.
The Bianchi type~III group is a three-dimensional subgroup of \(\operatorname{PSL}(2,\mathbb R)\times\mathbb R\) that acts transitively on the manifold, and its stabilizer is trivial. Thus, $\HH^2 \times \RR$ can be realized as a group manifold with a left-invariant metric.

A Ricci soliton is a Riemannian manifold with metric $g$ such that there exists a vector field $X$ and a constant $\lambda\in\RR$ satisfying the equation
\begin{equation}
\Lie_Xg+\Ric=\lambda g. \label{eq:soliton-def}
\end{equation}
They are of interest since they are self-similar solutions of the Ricci flow and serve as local models for singularity formation \cite{hamilton1988,hamilton1982}.
A soliton is called \emph{shrinking}, \emph{steady}, or \emph{expanding}
depending on the sign of $\lambda$: $\lambda>0$, $\lambda=0$, or $\lambda<0$; it is a
\emph{gradient} Ricci soliton if $X=\operatorname{grad}h$ for some
smooth function $h$ \cite{cao2009}.

The problem of left-invariant Ricci solitons on $\HH^2 \times \RR$ was recently investigated in \cite{belarbi2020}, where the system of partial differential equations for the components of $X$ was solved and the non-existence of gradient solitons was claimed. We revisit this computation and find that it misses a family of solutions; in particular, a one-parameter gradient subfamily exists.
We also note that $\HH^2\times\RR$ has been studied within the more restrictive notion of an algebraic Ricci soliton, for which the Ricci operator decomposes as $c\,\mathrm{Id}+D$, where $D$ is a derivation of the Lie algebra \cite{atashpeykar2020}; this condition is an algebraic constraint on the operator and does not address the question of whether the soliton vector field is gradient, which is the subject of the present article.

In this article, we show that
the classification on $\HH^2\times\RR$ admits a short, geometrically
transparent derivation: since the $\HH^2$-factor has constant curvature
$-1$, the soliton equation forces the $\HH^2$-part of $X$ to be
a \emph{Killing field} on $\HH^2$ (Lemma~\ref{lem:rigidity} below);
the algebra of such fields is classical and is isomorphic to $\slR$. Together with
the component along $\RR$, this yields the complete four-parameter
classification (Theorem~\ref{thm:main}): the set of soliton fields
forms a four-dimensional affine space that is an affine
translate of the Killing algebra $\mathfrak{isom}(\HH^2\times\RR)\simeq\slR\oplus\RR$,
together with an explicit gradient subfamily (Theorem~\ref{thm:gradient}).

\section{Geometry of \texorpdfstring{$\HH^2\times\RR$}{H2 x R}}

Let $\HH^2=\{(x,y)\in\RR^2:y>0\}$ carry the hyperbolic metric
$g_{\HH^2}=\frac1{y^2}(dx^2+dy^2)$. With the group law obtained from
the composition of proper affine maps of the line, $\HH^2$ is a Lie group,
and $g_{\HH^2}$ is left-invariant; consequently, so is the product metric
on $\HH^2\times\RR$,
\begin{equation}
g=\frac1{y^2}\bigl(dx^2+dy^2\bigr)+dz^2. \label{eq:metric}
\end{equation}
An orthonormal frame of left-invariant vector fields is given by
\begin{equation}
E_1=y\,\partial_x,\qquad E_2=y\,\partial_y,\qquad E_3=\partial_z,
\label{eq:frame}
\end{equation}
with Lie brackets
\begin{equation}
[E_1,E_2]=-E_1,\qquad [E_2,E_3]=0,\qquad [E_3,E_1]=0. \label{eq:brackets}
\end{equation}
A direct computation from \eqref{eq:brackets} using the Koszul formula \cite{docarmo} gives
the Levi-Civita connection
\begin{equation}\label{eq:conn}
\nabla_{E_1}E_1=E_2,\ \ \nabla_{E_1}E_2=-E_1,\ \ \nabla_{E_1}E_3=0,\qquad
\nabla_{E_2}E_i=0,\qquad \nabla_{E_3}E_i=0\ (i=1,2,3),
\end{equation}
the curvature $R(E_1,E_2)E_1=E_2,\ R(E_1,E_2)E_2=-E_1$, and the Ricci tensor
\begin{equation}
\Ric_{11}=\Ric_{22}=-1,\qquad \Ric_{33}=0,\qquad \Ric_{ij}=0\ (i\neq j).
\label{eq:ricci}
\end{equation}
Thus, in the tangent directions $E_1,E_2$,
the Ricci tensor is exactly equal to $-g_{\HH^2}$, reflecting the fact
that $(\HH^2,g_{\HH^2})$ is an Einstein manifold of constant curvature $-1$;
on the $\RR$-factor it vanishes.

\section{The soliton equation}

For $X=f_1E_1+f_2E_2+f_3E_3$ with $f_1,f_2,f_3\in C^\infty(\HH^2\times\RR)$,
the orthonormality of $\{E_i\}$ reduces the Lie derivative
$(\Lie_Xg)(Y,Z)=X\bigl(g(Y,Z)\bigr)-g([X,Y],Z)-g(Y,[X,Z])$ to
\begin{equation}
\begin{cases}
(\Lie_Xg)(E_1,E_1)=2\bigl(E_1f_1-f_2\bigr), &
(\Lie_Xg)(E_1,E_2)=f_1+E_1f_2+E_2f_1,\\
(\Lie_Xg)(E_1,E_3)=E_1f_3+E_3f_1, &
(\Lie_Xg)(E_2,E_2)=2\,E_2f_2,\\
(\Lie_Xg)(E_2,E_3)=E_2f_3+E_3f_2, &
(\Lie_Xg)(E_3,E_3)=2\,E_3f_3.
\end{cases}
\end{equation}
Since
$E_1f=y\,\partial_xf$, $E_2f=y\,\partial_yf$, and $E_3f=\partial_zf$ from
\eqref{eq:frame}, the soliton equation \eqref{eq:soliton-def} together with
\eqref{eq:ricci} is equivalent to
\begin{equation}
\begin{cases}
2y\,\partial_xf_1-2f_2-1=\lambda, & (a)\\
f_1+y\,\partial_yf_1+y\,\partial_xf_2=0, & (b)\\
2y\,\partial_yf_2-1=\lambda, & (c)\\
\partial_zf_1+y\,\partial_xf_3=0, & (d)\\
\partial_zf_2+y\,\partial_yf_3=0, & (e)\\
2\,\partial_zf_3=\lambda. & (f)
\end{cases}\label{eq:system}
\end{equation}

\section{Classification via Killing fields}

The observation is that the subsystem formed by
equations (a),(b),(c) of \eqref{eq:system} involves only $f_1,f_2$,
that is, only the $\HH$-component $Y:=f_1E_1+f_2E_2$ of the field $X$, and is precisely
the Ricci soliton equation on $(\HH^2,g_{\HH^2})$:
\begin{equation}
\Lie_Yg_{\HH^2}+\Ric_{\HH^2}=\lambda\,g_{\HH^2},
\label{eq:H2-soliton}
\end{equation}
provided that $f_1,f_2$ are independent of $z$. We first verify the latter
point and then classify the solutions of \eqref{eq:H2-soliton} using the constancy of the curvature.

\begin{lemma}\label{lem:zindep}
Every solution $(f_1,f_2,f_3,\lambda)$ of the system \eqref{eq:system} has
$f_1,f_2$ independent of $z$.
\end{lemma}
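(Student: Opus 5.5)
The plan is to integrate the three equations (d), (e), (f) of \eqref{eq:system} that involve $\partial_z$. This makes the $z$-dependence of $f_1,f_2$ explicit: both are affine in $z$, with the $z$-coefficient determined by one function $c(x,y)$. I would then substitute into (a), (b), (c) and read off the coefficients of $z$. That gives an overdetermined linear PDE system for $c$, which should force $\partial_x c=\partial_y c=0$.

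Concretely, (f) gives $f_3=\tfrac{\lambda}{2}z+c(x,y)$ for some smooth $c$ on $\HH^2$. Then (d) becomes $\partial_z f_1=-y\,\partial_x c$. The right-hand side is independent of $z$, so
\[
f_1=-y\,c_x\,z+a(x,y).
\]
Likewise (e) gives
\[
f_2=-y\,c_y\,z+b(x,y).
\]
Next I substitute these into (a), (b), (c). Since $c,a,b$ do not depend on $z$, each equation splits into its $z^1$ and $z^0$ parts. The $z^1$ parts should be:
\[
\text{from (c): } \partial_y(y\,c_y)=0,\qquad
\text{from (a): } y\,c_{xx}=c_y,\qquad
\text{from (b): } \partial_y(y\,c_x)=0 .
\]
For (b), I expect the $z$-coefficient to come out as $-2y\,c_x-2y^2c_{xy}$, i.e.\ $-2y\,\partial_y(y\,c_x)$, which gives the last condition. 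Geometrically, these three conditions say that the gradient of $c$ on $\HH^2$ satisfies a Killing-type equation, but I will just solve them directly.

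The conditions from (c) and (b) give $y\,c_y=q(x)$ and $y\,c_x=p(x)$, so $c_x=p(x)/y$ and $c_y=q(x)/y$. Equality of mixed partials $\partial_y c_x=\partial_x c_y$ gives $-p(x)/y^2=q'(x)/y$, i.e.\ $-p(x)=y\,q'(x)$ for all $y>0$. Hence $q'\equiv0$ and $p\equiv0$. So $c_x=0$ and $c_y=q_0/y$ with $q_0$ constant. The condition from (a) then reads $0=y\,c_{xx}=c_y=q_0/y$, so $q_0=0$. Therefore $c$ is constant, the $z$-coefficients of $f_1$ and $f_2$ vanish, and $f_1=a(x,y)$, $f_2=b(x,y)$, which proves Lemma~\ref{lem:zindep}.

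The main obstacle is the bookkeeping in extracting the $z$-coefficient of (b), where sign errors are easy to make. Everything after that is elementary ODE reasoning. A side benefit is that $f_3=\tfrac{\lambda}{2}z+\text{const}$, which will also be useful later in the classification.
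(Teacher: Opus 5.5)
Your proof is correct and follows the paper's argument. You integrate (f), (d), (e) to make $f_1,f_2$ affine in $z$, then kill the $z$-coefficients using (c), (b), (a) in the same order as the paper; the only cosmetic difference is that you handle (b) through the compatibility condition $\partial_y c_x=\partial_x c_y$, whereas the paper integrates explicitly to $\xi=-\rho(x)\ln y+d(x)$ and uses the linear independence of $\ln y$ and $1$.
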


\begin{proof}
From (f) we have
\[
f_3=\frac{\lambda}{2}z+\xi(x,y).
\]
Equations (d) and (e) give
\[
\partial_zf_1=-y\xi_x,\qquad
\partial_zf_2=-y\xi_y.
\]
Therefore,
\[
f_1=\varphi z+\psi,\qquad
f_2=\rho z+\chi,
\]
where
$$
\varphi=-y\xi_x,\qquad
\rho=-y\xi_y.
$$
Substituting into (c) and comparing the coefficients
of $z$ gives
$\rho_y=0.$
Hence $\rho=\rho(x)$ and $\xi=-\rho(x)\ln y+d(x),$
whence $\varphi=y\rho'(x)\ln y-yd'(x).$
The coefficient of $z$ in (b) equals
$$
\rho'(x)\ln y+\rho'(x)-d'(x)=0.
$$
Since this identity holds for all $y>0$ and the
functions $\ln y$ and $1$ are linearly independent,
we obtain $\rho'(x)=0$ and $d'(x)=0$.
Thus, $\rho=c_0$ and $\varphi=0$. Finally, the
coefficient of $z$ in (a) equals $-2c_0$, whence
$c_0=0$. Therefore,
$$
\partial_zf_1=\partial_zf_2=0.
$$
\end{proof}

\begin{lemma}\label{lem:rigidity}
Let $Y$ be a smooth vector field on $\HH^2$ such that
$\Lie_Yg_{\HH^2}=cg_{\HH^2}$ for a constant $c\in\RR$. Then $c=0$, that is,
$Y$ is a Killing field.
\end{lemma}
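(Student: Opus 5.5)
The plan is to exploit the fact that a homothetic field preserves the Levi-Civita connection, and hence the Ricci tensor, while on $\HH^2$ the Ricci tensor is a fixed nonzero multiple of the metric. This forces the homothety constant to vanish.

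First I will show that $\Lie_Y\nabla=0$. The tensor $(\Lie_Y\nabla)(A,B)=\Lie_Y(\nabla_AB)-\nabla_{\Lie_YA}B-\nabla_A(\Lie_YB)$ is determined by the covariant derivative of $h:=\Lie_Yg_{\HH^2}$ through the standard Koszul-type identity
\[
2\,g_{\HH^2}\bigl((\Lie_Y\nabla)(A,B),C\bigr)=(\nabla_Ah)(B,C)+(\nabla_Bh)(A,C)-(\nabla_Ch)(A,B).
\]
Here $h=c\,g_{\HH^2}$ with $c$ constant, so $\nabla h=c\,\nabla g_{\HH^2}=0$. Consequently $\Lie_Y\nabla=0$.

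Because the flow of $Y$ preserves $\nabla$ locally, it also preserves every tensor built naturally from $\nabla$ alone. In particular $\Lie_YR=0$ for the $(1,3)$ curvature tensor. The Ricci tensor is a contraction of this tensor, and $\Lie_Y$ commutes with contractions, so $\Lie_Y\Ric_{\HH^2}=0$. No completeness of the flow is needed for this, because each identity is pointwise and infinitesimal.

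Finally, by \eqref{eq:ricci}, or simply because $\HH^2$ has constant curvature $-1$ in dimension two, we have $\Ric_{\HH^2}=-g_{\HH^2}$. Hence
\[
0=\Lie_Y\Ric_{\HH^2}=-\Lie_Yg_{\HH^2}=-c\,g_{\HH^2},
\]
which gives $c=0$, so $Y$ is a Killing field.

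The only step requiring real care is the first one, $\Lie_Y\nabla=0$. If a self-contained check is preferred, I would instead verify it in the frame \eqref{eq:frame}, or argue through the local flow $\varphi_t$. The local flow satisfies $\varphi_t^*g_{\HH^2}=e^{ct}g_{\HH^2}$, and constant rescaling leaves $\nabla$ unchanged while multiplying the sectional curvature by $e^{-ct}$. Comparing $K\equiv-1$ at $p$ and at $\varphi_t(p)$ then gives $-1=-e^{-ct}$ for all small $t$, so again $c=0$.
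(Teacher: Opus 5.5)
Your proof is correct, but its main line differs from the paper's. The paper argues through the local flow. It integrates $\Lie_Yg_{\HH^2}=c\,g_{\HH^2}$ to $\phi_t^*g_{\HH^2}=e^{ct}g_{\HH^2}$. By naturality, the pullback still has Gaussian curvature $-1$. By the scaling law $\mathcal K_{\alpha g}=\alpha^{-1}\mathcal K_g$, the rescaled metric has curvature $-e^{-ct}$. Together these force $c=0$, which is exactly the fallback you sketch in your last paragraph.

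Your primary route never integrates $Y$. The Koszul-type identity shows that every homothetic field is affine. This is the same identity the paper uses later, in the remark after Corollary~\ref{cor:Lie}, to get $\Lie_X\nabla=0$ for soliton fields. Hence $\Lie_Y\Ric_{\HH^2}=0$, and the Einstein identity $\Ric_{\HH^2}=-g_{\HH^2}$ turns this into $c\,g_{\HH^2}=0$.

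What your version buys is a purely tensorial, pointwise argument. You never compare curvatures at $p$ and $\phi_t(p)$ or worry about the domain of the flow. It also isolates the principle behind the lemma: on any Einstein manifold with nonzero Einstein constant, homothetic fields are Killing.

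The paper's version relies only on more elementary facts: naturality of curvature under diffeomorphisms and its behaviour under constant rescaling. It uses only that the curvature is a nonzero constant, not the Einstein condition. In dimension two these hypotheses coincide, so either argument serves the classification equally well.
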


\begin{proof}
Let $\{\phi_t\}$ be the local flow generated by $Y$. The condition
$\Lie_Y g_{\HH^2}=c\,g_{\HH^2}$
implies $\phi_t^*g_{\HH^2}=e^{ct}g_{\HH^2}.$
Since $\phi_t$ is a diffeomorphism and $g_{\HH^2}$ has constant Gaussian
curvature $\mathcal K=-1$, the pullback metric $\phi_t^*g_{\HH^2}$ also has
constant Gaussian curvature equal to $\mathcal K=-1$. On the other hand, a constant
rescaling of a Riemannian metric by a factor $\alpha>0$ changes its
Gaussian curvature according to
$\mathcal K_{\alpha g}=\alpha^{-1}\mathcal K_g.$
Hence $\mathcal K_{e^{ct}g_{\HH^2}}=-e^{-ct}.$
Comparing the two expressions gives $-e^{-ct}=-1$,
so $c=0$.
\end{proof}

Combining Lemmas~\ref{lem:zindep} and \ref{lem:rigidity} with
\eqref{eq:H2-soliton}: writing $\Ric_{\HH^2}=-g_{\HH^2}$, equation
\eqref{eq:H2-soliton} takes the form $\Lie_Yg_{\HH^2}=(\lambda+1)g_{\HH^2}$,
so Lemma~\ref{lem:rigidity} gives
\begin{equation}
\lambda=-1 \qquad\text{and}\qquad \Lie_Yg_{\HH^2}=0,
\end{equation}
that is, $Y$ is a Killing field on $\HH^2$. The Killing algebra on $\HH^2$
is classical: it is isomorphic to $\slR$ and is realized by the three vector fields
\begin{equation}
V_1 = \partial_x,\qquad V_2 =  x\partial_x+y\partial_y,\qquad
V_3 =(x^2-y^2)\partial_x+2xy\partial_y
\end{equation}
generating, respectively, translations,
dilations, and the one-parameter group of ``special conformal''
maps $z\mapsto z/(cz+1)$ of the boundary circle (here $z=x+iy$). Consequently, in the coordinate basis
\begin{equation}
Y=\bigl[A(x^2-y^2)+Bx+C\bigr]\partial_x+\bigl[2Axy+By\bigr]\partial_y,
\qquad A,B,C\in\RR, \label{eq:Y-killing}
\end{equation}
and correspondingly $f_1=Y^x/y,\ f_2=Y^y/y$, that is,
\begin{equation}
f_1=\frac{1}{y} \left( A(x^2-y^2)+Bx+C \right),\qquad f_2=2Ax+B. \label{eq:f12}
\end{equation}
It remains to determine $f_3$. For $\lambda=-1$, equation (f) gives
$f_3=E_0-\tfrac z2+\xi(x,y)$ for a constant $E_0$ and a function $\xi$
independent of $z$; substituting \eqref{eq:f12} (now independent of $z$)
into (d) and (e) gives $\partial_x\xi=\partial_y\xi=0$, so $\xi$ is itself
a constant, which is absorbed into $E_0$. We state the resulting
classification.

\begin{theorem}[Classification]\label{thm:main}
The manifold $(\HH^2\times\RR,\, g)$ with the metric \eqref{eq:metric} is an expanding Ricci soliton with $\lambda=-1$. The set $\mathcal S$ of all vector fields $X$ satisfying
the Ricci soliton equation \eqref{eq:soliton-def} is a
four-dimensional affine space and has the form
\begin{equation}
    \mathcal{S} = -\frac{1}{2}z\partial_z + \mathfrak{isom}(\HH^2 \times \RR).
    \label{eq:4_param_set}
\end{equation}
\end{theorem}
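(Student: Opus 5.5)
The proof assembles the analysis carried out just before the statement and then identifies the resulting four-parameter family with $-\tfrac12 z\partial_z+\mathfrak{isom}(\HH^2\times\RR)$. Let $X=f_1E_1+f_2E_2+f_3E_3$ solve \eqref{eq:soliton-def}, which is equivalent to the system \eqref{eq:system}. By Lemma~\ref{lem:zindep}, $f_1$ and $f_2$ do not depend on $z$. Hence equations (a),(b),(c) say exactly that $Y=f_1E_1+f_2E_2$ satisfies \eqref{eq:H2-soliton} on $\HH^2$. Since $\Ric_{\HH^2}=-g_{\HH^2}$, this reads $\Lie_Yg_{\HH^2}=(\lambda+1)g_{\HH^2}$. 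Lemma~\ref{lem:rigidity} then forces $\lambda=-1$, so the soliton is expanding, and it forces $Y$ to be Killing. Therefore $Y$ has the form \eqref{eq:Y-killing}, i.e.\ $f_1,f_2$ are given by \eqref{eq:f12}.

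Next I determine $f_3$. Equation (f) gives $f_3=-\tfrac z2+\xi(x,y)$. Because $f_1,f_2$ are now $z$-independent, equations (d) and (e) reduce to $\xi_x=\xi_y=0$, so $\xi=E_0$ is constant. Every solution is therefore
\[
X=Y-\tfrac12 z\,\partial_z+E_0\,\partial_z,
\]
with $Y$ a Killing field of $\HH^2$.

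Conversely, I must check that every such $X$ is a soliton field, so that existence holds and $\mathcal S$ is nonempty. Since $\Lie_Xg$ is linear in $X$, I will show directly that
\[
\Lie_{-\frac12 z\partial_z}g=-dz^2,
\]
using $\Lie_{z\partial_z}dz^2=2dz^2$. Adding $\Ric=-g_{\HH^2}$ then gives $-g_{\HH^2}-dz^2=-g$, which is \eqref{eq:soliton-def} with $\lambda=-1$. Adding any Killing field leaves $\Lie_Xg$ unchanged. This shows
\[
\mathcal S=-\tfrac12 z\partial_z+\mathfrak k,
\]
where $\mathfrak k$ is the space of Killing fields of $g$. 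It also re-derives the four parameters $A,B,C,E_0$.

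The remaining step, and the only genuinely non-routine one, is to identify $\mathfrak k$ with $\mathfrak{isom}(\HH^2\times\RR)$. Concretely, I must show this algebra is exactly $\mathrm{span}\{V_1,V_2,V_3,\partial_z\}$, of dimension four. For the inclusion $\supseteq$: each $V_i$ is Killing for $g_{\HH^2}$ and independent of $z$, and $\partial_z$ is Killing for $dz^2$, so all four are Killing for the product metric $g$.

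For the reverse inclusion, I plan to reuse the computation already done rather than quote de Rham decomposition. A Killing field $K$ is a solution of the system \eqref{eq:system} with the Ricci terms removed and $\lambda=0$. The proof of Lemma~\ref{lem:zindep} goes through verbatim for this system: it only compared coefficients of $z$, which are unaffected by the constant terms. So the $\HH^2$-part of $K$ is a $z$-independent Killing field of $\HH^2$. The third equation becomes $\partial_zf_3=0$, and (d),(e) then make $f_3$ constant.

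Hence $\dim\mathfrak k=4$ and $\mathfrak k=\mathfrak{isom}(\HH^2\times\RR)\simeq\slR\oplus\RR$. It follows that $\mathcal S$ is a four-dimensional affine space of the stated form \eqref{eq:4_param_set}.
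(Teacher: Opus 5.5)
Your proposal is correct and follows the paper's argument: Lemma~\ref{lem:zindep} makes $f_1,f_2$ independent of $z$, Lemma~\ref{lem:rigidity} applied to $\Lie_Yg_{\HH^2}=(\lambda+1)g_{\HH^2}$ forces $\lambda=-1$ and makes $Y$ Killing, and (d), (e), (f) then pin down $f_3=E_0-\tfrac z2$. Your converse check $\Lie_{-\frac12 z\partial_z}g=-dz^2$, and your identification of the Killing algebra of $g$ with $\mathrm{span}\{V_1,V_2,V_3,\partial_z\}$ by rerunning the coefficient-of-$z$ argument with zero right-hand sides, are sound and fill in two points the paper only asserts.
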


\begin{remark}
In \eqref{eq:4_param_set} the sum is understood as an affine translate of the vector space of Killing fields $\mathfrak{isom}(\HH^2 \times \RR) \simeq \slR \oplus \RR.$ The field $X$ from \eqref{eq:4_param_set} has the form
\begin{equation}
    X = A V_3\, + \, BV_2\, + \, CV_1\,+\,E_0 \partial_z\, -\, \frac{1}{2}z \partial_z  ,\qquad A,B,C,E_0\in\RR.
    \label{eq:X-final}
\end{equation} The three parameters $A,B,C$ correspond to an arbitrary Killing field on $\HH^2$, while the parameter $E_0$ corresponds to an arbitrary Killing component along $\mathbb R$. Thus, the soliton field $X$ differs from a Killing field on $\HH^2 \times \RR$ by the fixed component $-\frac 12 z\partial_z$.
Equivalently, $\mathcal S$ is an affine hyperplane
in the five-dimensional algebra $\slR\oplus\mathfrak{aff}(\RR)$.
\end{remark}

\begin{corollary}
\label{cor:Lie}
For any soliton vector field of the form
\eqref{eq:X-final}, the Lie derivative of the metric has the form
\begin{equation*}
    \mathcal L_Xg=-dz^2.
\end{equation*}
In particular, $\mathcal L_Xg$ is independent of the choice of soliton
vector field within the affine family $\mathcal S$.
\end{corollary}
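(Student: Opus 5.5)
The plan is to exploit linearity of the Lie derivative in $X$ together with the decomposition \eqref{eq:X-final}. Every soliton field has the form $X = K - \tfrac12 z\,\partial_z$, where $K = AV_3+BV_2+CV_1+E_0\partial_z$ is a Killing field of $(\HH^2\times\RR,g)$. Hence
\[
\Lie_X g = \Lie_K g - \tfrac12\,\Lie_{z\partial_z} g = -\tfrac12\,\Lie_{z\partial_z} g .
\]
It therefore suffices to compute the Lie derivative of $g$ along the single fixed field $z\partial_z$. This reduction also gives the independence claim at once, since the result will not involve $A,B,C,E_0$.

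To see that $K$ is Killing, I would recall the argument briefly. The fields $V_1,V_2,V_3$ are Killing for $g_{\HH^2}$ and have no $z$-dependence and no $\partial_z$-component, so they preserve both summands of $g$ in \eqref{eq:metric}. The field $\partial_z$ is Killing because the coefficients of $g$ do not depend on $z$.

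For $z\partial_z$, I would use the coordinate formula $(\Lie_W g)_{ij}=W^k\partial_k g_{ij}+g_{kj}\partial_i W^k+g_{ik}\partial_j W^k$.
\begin{itemize}[label=$\cdot$]
\item The only nonzero component of $W=z\partial_z$ is $W^z=z$.
\item The metric coefficients depend only on $y$, so the transport term $W^k\partial_k g_{ij}$ vanishes.
\item The derivative terms contribute only through $\partial_z W^z=1$, which gives $(\Lie_W g)_{zz}=2g_{zz}=2$ and all other components zero.
\end{itemize}
Equivalently, $\Lie_{z\partial_z}dz^2 = 2\,dz\,d(z) = 2\,dz^2$, while $\Lie_{z\partial_z}$ of $y^{-2}(dx^2+dy^2)$ is zero. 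Thus $\Lie_X g = -\tfrac12\cdot 2\,dz^2=-dz^2$.

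As a consistency check, this matches the soliton equation directly. With $\lambda=-1$, \eqref{eq:soliton-def} and \eqref{eq:ricci} give $\Lie_X g = -g-\Ric = -g+g_{\HH^2} = -dz^2$. This second route could itself serve as a one-line proof from Theorem~\ref{thm:main}. No step is a genuine obstacle; the only point needing care is confirming that the $V_i$ are Killing for the full product metric, which is immediate from their form.
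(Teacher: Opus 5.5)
Your proof is correct and is essentially the argument the paper leaves implicit: the corollary is stated without proof, right after the remark that $X$ differs from a Killing field by the fixed term $-\tfrac12 z\partial_z$, so that $\Lie_X g=-\tfrac12\Lie_{z\partial_z}g=-dz^2$, or equivalently $\Lie_X g=\lambda g-\Ric=-g+g_{\HH^2}=-dz^2$ with $\lambda=-1$ — exactly your two routes. Your direct computation has a small bonus: combined with $\Ric=-g_{\HH^2}$, it also confirms that every field in \eqref{eq:X-final} actually satisfies the soliton equation, an inclusion the paper's derivation of Theorem~\ref{thm:main} only asserts implicitly.
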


\begin{remark}
For any soliton vector field from the family
\eqref{eq:X-final}, one has
$\mathcal{L}_X\nabla = 0$. Indeed, for the Levi-Civita connection,
the Lie derivative of the connection is given by the standard formula
$$
g\bigl((\mathcal L_X \nabla)(Y,Z),\, W\bigr)
= \frac{1}{2}\, \Bigl(
  \nabla_Y(\mathcal L_X g)(Z,W)
  + \nabla_Z(\mathcal L_X g)(Y,W)
  - \nabla_{W}(\mathcal L_X g)(Y,Z)
\Bigr).
$$
Since $\mathcal L_X g = -dz^2$ and $dz$ is parallel ($\nabla dz=0$),
we have $\nabla(\mathcal L_X g)=0$; hence $\mathcal L_X\nabla=0$.
Consequently, the curvature tensor and the Ricci tensor are also
invariant under $X$: $\mathcal L_X R=0$ and $\mathcal L_X \operatorname{Ric}=0$.
\end{remark}

\section{Gradient Ricci solitons}

\begin{theorem}\label{thm:gradient}
Among the vector fields \eqref{eq:X-final}, the one-parameter
subfamily of the form
\begin{equation}
X=\Bigl(E_0-\frac z2\Bigr)\partial_z=\operatorname{grad}h
\end{equation}
is a gradient Ricci soliton.
The corresponding potential has the form
\begin{equation*}
    h(z)=E_0z-\frac{z^2}{4}+\mathrm{const}.
\end{equation*}
In other words, the soliton field \eqref{eq:X-final} is gradient
if and only if its $\HH^2$-component vanishes.
\end{theorem}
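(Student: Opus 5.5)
We prove the two directions of the ``if and only if'' separately. In both we use the classification of Theorem~\ref{thm:main} together with the fact that, on a Riemannian manifold, a vector field $X$ is (locally) a gradient if and only if the dual $1$-form $X^\flat=g(X,\cdot)$ is closed. Since $\HH^2\times\RR$ is simply connected, closedness is also equivalent to global exactness.

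\emph{Sufficiency.} Take $A=B=C=0$ in \eqref{eq:X-final}, so that $X=(E_0-\tfrac z2)\partial_z$. Because $g_{zz}=1$ and there are no cross terms in \eqref{eq:metric}, we have $X^\flat=(E_0-\tfrac z2)\,dz$. Setting $h(z)=E_0z-\tfrac{z^2}{4}+\mathrm{const}$ gives $dh=X^\flat$, so $X=\operatorname{grad}h$. Theorem~\ref{thm:main} already guarantees that $X$ satisfies the soliton equation with $\lambda=-1$. As a consistency check we can compute $\nabla^2h=-\tfrac12dz^2$ directly, using $\nabla dz=0$. Then $\Lie_Xg=2\nabla^2h=-dz^2$, in agreement with Corollary~\ref{cor:Lie}.

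\emph{Necessity.} Let $X$ be given by \eqref{eq:X-final}. Write $X^\flat=\frac{1}{y^2}\bigl(Y^x\,dx+Y^y\,dy\bigr)+(E_0-\tfrac z2)\,dz$, with $Y^x,Y^y$ as in \eqref{eq:Y-killing}. The $dz$-part is closed and does not depend on $x,y$. Moreover $Y^x,Y^y$ do not depend on $z$. Hence
\[
dX^\flat=\Bigl(\partial_x\frac{Y^y}{y^2}-\partial_y\frac{Y^x}{y^2}\Bigr)dx\wedge dy .
\]
We compute this coefficient with $Y^y/y^2=(2Ax+B)/y$ and $Y^x/y^2=\bigl(A(x^2-y^2)+Bx+C\bigr)/y^2$. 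This gives
\[
\frac{2A}{y}+\frac{2\bigl(A(x^2-y^2)+Bx+C\bigr)}{y^3}+\frac{2A}{y}
=\frac{2\bigl(A(x^2+y^2)+Bx+C\bigr)}{y^3}.
\]
For $X$ to be a gradient this must vanish identically on $\{y>0\}$. The polynomial $A(x^2+y^2)+Bx+C$ vanishes identically only if $A=B=C=0$. Thus the $\HH^2$-component of $X$ is zero, and $X$ is exactly the one-parameter family described in the statement.

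An alternative argument for necessity avoids the coordinate computation. A gradient Killing field $Y=\operatorname{grad}\eta$ on $\HH^2$ has $\nabla^2\eta=0$, so $Y$ is parallel. Then $R(\cdot,\cdot)Y=0$, and since the curvature is $-1$ this forces $Y=0$. In the product, $X$ is a gradient if and only if its $\HH^2$-part is a gradient on $\HH^2$, because the components do not mix. We would present the explicit $d X^\flat$ computation as the main argument. The only step needing care is the sign bookkeeping in $\partial_y(Y^x/y^2)$. We expect no serious obstacle.
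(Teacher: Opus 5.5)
Your proof is correct, but your main necessity argument takes a different route from the paper's. The paper works coordinate-free. From $X=K+(E_0-\tfrac z2)\partial_z=\operatorname{grad}h$ it gets $h=h_{\HH}+E_0z-\tfrac{z^2}{4}$, so the Killing field $K$ equals $\operatorname{grad}_{\HH^2}h_{\HH}$. Then $2\operatorname{Hess}h_{\HH}=\Lie_Kg_{\HH^2}=0$ gives $\nabla K=0$, and $R(K,V)K=|K|^2V$ on the curvature $-1$ factor forces $K\equiv0$. This is precisely your ``alternative argument.''

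Your main route instead uses the explicit parametrization \eqref{eq:Y-killing} to compute $dX^\flat=\frac{2(A(x^2+y^2)+Bx+C)}{y^3}\,dx\wedge dy$, and the sign bookkeeping is right. Necessity then reduces to a polynomial vanishing on the half-plane.

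Your computation is fully elementary and makes the obstruction explicit: the curl of a nonzero Killing component vanishes at most along a single geodesic. However, it depends on the explicit form of the Killing algebra. The paper's argument uses only that $K$ is a Killing field on a factor of nonzero curvature, so it needs no formulas and would carry over verbatim to any factor admitting no nonzero parallel vector fields.

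Your sufficiency direction (explicit $h$, with the check $\Lie_Xg=2\nabla^2h=-dz^2$ against Corollary~\ref{cor:Lie}) spells out what the paper calls obvious. Simple connectivity is not actually needed anywhere: necessity uses only $d^2=0$, and sufficiency exhibits $h$ directly.
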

\begin{proof}
By Theorem~\ref{thm:main}, any soliton field has the form
$$
X=K+\left(E_0-\frac z2\right)\partial_z,
\qquad
K\in\mathfrak{sl}(2,\RR).
$$
Suppose that $X=\operatorname{grad}h$. Then
$$
h=h_{\HH}+E_0z-\frac{z^2}{4}
$$
for some function $h_{\HH}$ on $\HH^2$, and therefore
$$
K=\operatorname{grad}_{\HH^2}h_{\HH}.
$$
Since $K$ is a Killing field,
$$
\mathcal L_Kg_{\HH^2}
=2\operatorname{Hess}_{\HH^2}h_{\HH}=0,
$$
that is, $\nabla\,dh_{\HH}=0$. Since the metric is
parallel, lowering the index commutes with $\nabla$,
so this is equivalent to $\nabla K=0$.

In particular, $R(U,V)K=0$
for all $U,V\in T\HH^2$. But $\HH^2$ has constant sectional
curvature $-1$, so
$$
R(U,V)K =-\bigl(g_{\HH^2}(V,K)U-g_{\HH^2}(U,K)V \bigr).
$$
Choosing $U=K$ and $V\perp K$ at a point where $K\neq0$, we obtain
$$R(K,V)K=|K|^2V=0,$$
a contradiction. Therefore, $K\equiv0$.
Thus, $A=B=C=0$. The converse is obvious.
\end{proof}

\begin{remark}
The gradient subfamily corresponds to the trivial component of $X$:
this is the Killing field $Y\equiv0$ on the $\HH^2$-factor together with
the one-dimensional gradient soliton potential on the flat $\RR$-factor.
The entire $\slR$-family
of Killing components is --- by Lemma~\ref{lem:rigidity} --- necessarily
non-gradient, since a nonzero Killing field
is not a gradient field.
\end{remark}

\section{Discussion and conclusion}

The classification obtained shows that all Ricci solitons on $\HH^2\times\mathbb R$ are expanding with $\lambda=-1$, and their vector fields form a four-parameter affine family determined by the Killing fields of the hyperbolic factor and a component along $\RR$. Gradient solitons constitute only a one-parameter subfamily and arise exclusively when the $\HH^2$-component vanishes.
Thus, the geometry of constant negative curvature completely fixes the hyperbolic part of the soliton and reduces the classification to the Killing algebra $\mathfrak{isom}(\HH^2\times\RR)\simeq\slR\oplus\RR$, translated by the fixed direction $-\tfrac12z\partial_z$.

\end{document}